\newtheorem{theorem}{Theorem}[section]
\newtheorem{proposition}[theorem]{Proposition}
\newtheorem{lemma}[theorem]{Lemma}
\theoremstyle{definition}
\newtheorem{definition}[theorem]{Definition}
\begin{document}

\title{Factorizations of Surjective Maps of Connected Quandles}

\author{T. Braun, C. Crotwell, A. Liu, P. Weston, \&  D.N. Yetter }

\maketitle

\begin{abstract}
    We consider the problem of when one quandle homomorphism will factor through another, restricting our attention to the case where all quandles involved are connected.  We provide a complete solution to the problem for surjective quandle homomorphisms using the structure theorem for connected quandles of Ehrman et al. \cite{Ehrman} and the factorization system for surjective quandle homomorphsims of Bunch et al. \cite{Bunch} as our primary tools.  The paper contains the substantive results obtained by an REU research group consisting of the first four authors under the mentorship of the fifth, and was supported by National Science Foundation, grant DMS-1659123.
\end{abstract}

\section{Motivation}

Quandles were originally defined by Joyce \cite{Joyce} for the purpose of studying classical knots.  However, as was shown by Yetter \cite{Yetter}, they are closely bound up with monodromy phenomena. They allow a purely algebraic description of monodromy as a homomorphism of quandles, or of augmented quandles, from the knot quandle of the singular set in the base (possibly equipped with its canonical augmentation in the fundamental group of the complement of the singular set) to a quandle (or augmented quandle) associated with the generic fiber.  Thus the study of homomorphisms of quandles has the potential for application in a variety of geometric settings in which monodromy naturally arises (as for example, branched coverings and Lefschetz fibrations). 

The study of the structure of quandle homormorphisms is intrinsically difficult; for in our present understanding, there is very little control over how subquandles sit inside larger ambient quandles (cf. Ehrman, et al. \cite{Ehrman} and Nelson and Wong \cite{Nelson} which give decomposition theorems and corresponding constructions of larger quandles from subquandles which are orbits under the inner automorphism group). 

The present work is a contribution to the difficult problem of determining conditions under which one quandle homomorphism will factor through another:  a problem, the solution to which would allow the construction of invariants of branched covers of 3-manifolds from quandle colorings of the link which is the branch set of the cover.  

Here, we consider only those homomorphisms with a(n algebraically) connected source: these would describe monodromy in which the singular set in the base is topologically connected (as in the case of a branched covering with a knot as the branch set).  We, however, do not address the full problem, restricting our attention to a setting in which both the quandles and the homomorphisms involved are well understood:  surjective homomorphisms between connected quandles.
This restriction is less onerous than it might seem, as the image of any quandle homomorphism with connected source is always a connected subquandle of the target quandle.

\section{Background}
We recall from Joyce \cite{Joyce}

\begin{definition} A \textit{quandle} is a set $Q$ with two binary operations, $\rhd$ and $\rhd^{-1}$ such that
\begin{enumerate}[label=\roman*)]
\item $\forall x\in Q$, $ x\rhd x=x$
\item $\rhd$ is right-invertible, i.e. $\forall x,y\in Q,$  $(x\rhd y) \rhd^{-1}y=x=(x\rhd^{-1}y)\rhd y$
\item $\rhd$ is right-distributive, i.e. $\forall x,y,z \in Q$, $(x\rhd y)\rhd z= (x\rhd z) \rhd (y\rhd z)$
\end{enumerate}

A \textit{quandle homomorphism} is a mapping $\Phi$ between $(Q,\rhd)$ and $(Q',\rhd')$ such that $\forall x,y\in Q$, $\Phi(x\rhd y)=\Phi(x)\rhd'\Phi(y)$. Preservation of $\rhd$ implies preservation of $\rhd ^{-1}$.

\end{definition}

Much of the complication in quandle theory comes from the lack of constants in the theory, resulting in there being no analogues of the kernels found in group and ring theory. Fortunately, quandle theory is initimately related to the more familiar theory of groups in a number of ways.  The first was observed by Joyce \cite{Joyce}: by defining $x\rhd y$ as $xyx^{-1}$ and $x\rhd^{-1} y$ as $x^{-1}yx$, any union of conjugacy classes in a group $G$ forms a quandle. It is a theorem of Joyce \cite{Joyce} that free quandles admit such a representation (as a union of conjugacy classes in a free group). When the entire group $G$ is made into a quandle in this way, it is denoted $Conj(G)$, and group homomorphisms induce quandle homomorphisms, so that $Conj$ is a functor from the category of groups, {\bf Grps}, to the category of quandles, {\bf Quand}.

There are also three groups canonically associated to any quandle.

First, like any mathematical object, quandles admit an \textit{automorphism group}:  $Aut(Q)$ being the group of invertible quandle homomorphisms from $Q$ to itself.

Second, the \textit{inner automorphism group} of a quandle, denoted $Inn(Q)$, is the subgroup of $Aut(Q)$ generated by the symmetries $S_x$ of Q: that is, the maps given by 
 \[S_x(y)\coloneqq y\rhd x .\]
 
 \noindent It is an easy exercise to show that $Inn(Q)$ is abelian if and only if it is trivial.
 
Finally, there is a group $AdConj(Q)$ with presentation
\[ \langle\; q \in Q\; |\; (q\rhd r)rq^{-1}r^{-1}, (q\rhd^{-1} r)r^{-1}q^{-1}r \;\; \mbox{\rm for all $q,r \in Q$}\rangle\]

\noindent The notation derives from the theorem of Joyce that $AdConj$ is the left adjoint to the functor from $Conj:{\bf Grps}\rightarrow {\bf Quand}$.

A \textit{connected quandle} is a quandle $Q$ that contains only one orbit under the action of $Inn(Q)$.

As our primary object of study will be surjective quandle homomorphisms,
we should recall several results and definitions from Bunch et al. \cite{Bunch}.

First

\begin{theorem} \cite{Bunch} \label{epifunctor}
The assignment of inner automorphism groups to quandles is a functor from
${\bf Quand_{epi}}$ the category with quandles as objects and surjective quandle homomorphisms as arrows to ${\bf Grps}$ the category of groups, and 

\[ Q \mapsto (Q, Inn(Q), q\mapsto -\rhd q) \]

\noindent is a functor from ${\bf Quand_{epi}}$ to ${\bf AugQuand}$, the category of augmented quandles.
\end{theorem}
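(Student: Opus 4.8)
The plan is to produce, for every surjective quandle homomorphism $\Phi\colon Q\to Q'$, a group homomorphism $Inn(\Phi)\colon Inn(Q)\to Inn(Q')$ acting on the canonical generators by $S_x\mapsto S_{\Phi(x)}$; once $Inn(\Phi)$ is known to be well defined, both functoriality and the augmented-quandle statement follow by inspection on generators. Since $Inn(Q)$ is \emph{generated} by the symmetries $S_x$ but these generators satisfy relations, the one substantive point is that the rule $S_x\mapsto S_{\Phi(x)}$ is independent of how an element of $Inn(Q)$ is written as a word in the $S_x$'s, and this is precisely where surjectivity of $\Phi$ enters.

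First I would record the intertwining identity. The homomorphism condition $\Phi(y\rhd x)=\Phi(y)\rhd'\Phi(x)$, together with the fact that a quandle homomorphism preserves $\rhd^{-1}$, gives $\Phi\circ S_x=S_{\Phi(x)}\circ\Phi$ and $\Phi\circ S_x^{-1}=S_{\Phi(x)}^{-1}\circ\Phi$ as maps $Q\to Q'$, hence by induction $\Phi\circ w=w^{\Phi}\circ\Phi$ for every word $w=S_{x_1}^{\epsilon_1}\cdots S_{x_n}^{\epsilon_n}$ (with $\epsilon_i=\pm1$), where $w^{\Phi}\coloneqq S_{\Phi(x_1)}^{\epsilon_1}\cdots S_{\Phi(x_n)}^{\epsilon_n}$. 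If $w_1$ and $w_2$ are two words representing the same element of $Inn(Q)$, then $w\coloneqq w_1w_2^{-1}$ represents $\mathrm{id}_Q$, so $\Phi=w^{\Phi}\circ\Phi$; since $\Phi$ is onto, every $q'\in Q'$ equals $\Phi(y)$ for some $y$, whence $w^{\Phi}(q')=w^{\Phi}(\Phi(y))=\Phi(w(y))=\Phi(y)=q'$, so $w^{\Phi}=\mathrm{id}_{Q'}$ and $w_1^{\Phi}$, $w_2^{\Phi}$ represent the same element of $Inn(Q')$. Thus $Inn(\Phi)\colon w\mapsto w^{\Phi}$ is a well-defined function $Inn(Q)\to Inn(Q')$; since concatenation of representing words corresponds to composition in both groups and $w\mapsto w^{\Phi}$ respects concatenation, $Inn(\Phi)$ is a group homomorphism with $Inn(\Phi)(S_x)=S_{\Phi(x)}$.

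Functoriality on $\mathbf{Quand_{epi}}$ is then immediate on generators: $Inn(\mathrm{id}_Q)$ fixes every $S_x$, so it is the identity, and for composable surjections $\Phi,\Psi$ both $Inn(\Psi\circ\Phi)$ and $Inn(\Psi)\circ Inn(\Phi)$ carry $S_x$ to $S_{\Psi(\Phi(x))}$. For the second assertion I would assign to $\Phi$ the pair $(\Phi,Inn(\Phi))$ and check it is a morphism of augmented quandles from $(Q,Inn(Q),q\mapsto S_q)$ to $(Q',Inn(Q'),q\mapsto S_q)$: one needs (i) $\Phi$ equivariant for the evaluation actions, $\Phi(g(y))=Inn(\Phi)(g)(\Phi(y))$ for $g\in Inn(Q)$ and $y\in Q$, which is exactly $\Phi\circ w=w^{\Phi}\circ\Phi$ for a word $w$ representing $g$; and (ii) commutativity of the augmentation square, i.e.\ $Inn(\Phi)(S_y)=S_{\Phi(y)}$, which holds by construction. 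Functoriality of this second assignment then reduces to functoriality of $Inn$ together with that of $\Phi\mapsto\Phi$ on underlying sets. I expect the well-definedness step to be the only real obstacle: conceptually, while $AdConj$ is a functor on \emph{all} quandle homomorphisms and the canonical map $AdConj(Q)\twoheadrightarrow Inn(Q)$ is always surjective, only for surjective $\Phi$ is the kernel of the source map carried into that of the target map; everything else, including the minor side-convention bookkeeping needed to view the evaluation map of $Inn(Q)$ on $Q$ as an action in the sense of $\mathbf{AugQuand}$, is routine.
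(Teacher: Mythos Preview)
Your argument is correct and is essentially the standard one: establish the intertwining $\Phi\circ S_x=S_{\Phi(x)}\circ\Phi$, push it through to arbitrary words, and then use surjectivity of $\Phi$ to cancel $\Phi$ on the right and conclude well-definedness of $Inn(\Phi)$; functoriality and the augmented-quandle morphism conditions then follow on generators exactly as you say.

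There is, however, nothing in the paper to compare your proof against. The theorem is stated as a result quoted from Bunch et al.\ \cite{Bunch} and the paper supplies no proof of its own; it is used purely as background. So your write-up is not an alternative to the paper's argument but rather a reconstruction of the cited result. As such it is sound, and the place you identify as the only substantive step---well-definedness, hinging on surjectivity---is indeed the crux; the remark that $AdConj$ is functorial on all quandle maps while passage to $Inn$ requires surjectivity to carry the kernel into the kernel is a clean conceptual explanation of why the hypothesis is needed.
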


\begin{definition} \cite{Bunch}
A \textit{rigid quotient} is a surjective quandle homomorphism $h: Q\twoheadrightarrow R$ such that the induced group homomorphism $f: Inn(Q)\rightarrow Inn(R)$ is an isomorphism.
\end{definition}

\begin{definition} \cite{Bunch}
A \textit{realizable kernel} is a subgroup $H \subset Inn(Q)$ where $h$ is a surjective quandle homomorphism $h:Q \twoheadrightarrow R$ and $H = ker(Inn(h))$.
\end{definition}

% Add any normal subgroup creating a quotient quandle...

It should be observed first that this definition is a property of normal subgroups of an inner automorphism group $Inn(Q)$ of a specified quandle $Q$, not of an abstract group isomorphic to $Inn(Q)$.  And second that it is non-vacuous:  unless the commutator subgroup is the entire group, it is never a realizable kernel.

Curiously, the orbits under any normal subgroup of $Inn(Q)$ acting on $Q$ admit a quandle structure induced by that on $Q$, indeed Bunch et al. \cite{Bunch} showed that the orbits under a subgroup of $Inn(Q)$ have an induced quandle structure if and only if the subgroup is normal.  But if $N$ is not a realizable kernel, then there will be a larger subgroup which is a realizable kernel, and which has the same orbits as $N$:

Bunch et al. \cite{Bunch} also showed that the intersection of an arbitrary family of realizable kernels in $Inn(Q)$ is a realizable kernel, and thus (noting that the whole group is a realizable kernel), there is a closure operation on the normal subgroups of $Inn(Q)$, giving the smallest realizable kernel containing $N$.  We denote the closure of $N$ under this operation by $N^Q$, as we will have cause to consider the closure operation with respect to different quandles with the same inner automorphism group simultaneously.

Bunch et al. \cite{Bunch} also provided a constructive description of this closure operation:  

\begin{theorem} \cite{Bunch} 
Let $N$ be a normal subgroup of $G = Inn(Q)$, and let $[q]$ denote the orbit of $q$ under the restriction of the action of $G$ to $N$, and $c_N:Q \rightarrow Q/N$ the quotient quandle homomorphism, then

\[ N^Q = ker(Inn(c_N)) = \bigcap_{X \in Q} G_{[x]} \].
\end{theorem}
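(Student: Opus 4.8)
The plan is to prove the two equalities separately, handling the concrete description $\bigcap_{x\in Q}G_{[x]}$ first and then the characterization of $N^Q$ as the smallest realizable kernel containing $N$.

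For the equality $ker(Inn(c_N))=\bigcap_{x\in Q}G_{[x]}$ I would simply unwind the definition of the induced map. By Theorem~\ref{epifunctor}, the surjection $c_N$ induces $Inn(c_N):Inn(Q)\rightarrow Inn(Q/N)$, and $(c_N,Inn(c_N))$ is a morphism of augmented quandles, so $c_N(g(q))=Inn(c_N)(g)(c_N(q))$ for all $g\in G$ and $q\in Q$; equivalently $Inn(c_N)(g)$ is the permutation $[q]\mapsto[g(q)]$ of $Q/N$. A permutation of $Q/N$ is trivial exactly when it fixes every orbit, so $g\in ker(Inn(c_N))$ iff $[g(q)]=[q]$ for all $q$. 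Since $G$ normalizes $N$, $g$ carries the $N$-orbit $[q]=Nq$ to $N\,g(q)$, so $[g(q)]=[q]$ is equivalent to $g$ stabilizing the set $[q]$, i.e.\ $g\in G_{[q]}$. Intersecting over $q$ yields the claim; the only genuine book-keeping is the identification of $Inn(c_N)(g)$ as the advertised permutation, which comes from writing $g$ as a word in the $S_x$ and applying functoriality.

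For $N^Q=ker(Inn(c_N))$ I would show that $ker(Inn(c_N))$ is a realizable kernel containing $N$, and that it is contained in every such. It contains $N$ by the previous paragraph, since every $n\in N$ fixes each $N$-orbit, so $N\subseteq\bigcap_q G_{[q]}=ker(Inn(c_N))$; and it is by definition a realizable kernel, since $c_N:Q\twoheadrightarrow Q/N$ is a surjective quandle homomorphism (the induced quandle structure on the orbits of a normal subgroup is precisely what makes $Q/N$ a quandle). For minimality, let $H=ker(Inn(h))$ be any realizable kernel with $N\subseteq H$, where $h:Q\twoheadrightarrow R$. The key step is that $h$ factors through $c_N$: if $q'=n(q)$ with $n\in N$, then by the naturality of the augmentation $h(q')=h(n(q))=Inn(h)(n)(h(q))=h(q)$, because $n\in N\subseteq ker(Inn(h))$. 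Hence $h$ is constant on $N$-orbits and descends to a well-defined map $\overline{h}:Q/N\rightarrow R$ with $h=\overline{h}\circ c_N$; surjectivity of $c_N$ then forces $\overline{h}$ to be a surjective quandle homomorphism. Functoriality gives $Inn(h)=Inn(\overline{h})\circ Inn(c_N)$, so $ker(Inn(c_N))\subseteq ker(Inn(h))=H$. Therefore $ker(Inn(c_N))$ is the smallest realizable kernel containing $N$, which is exactly $N^Q$.

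The step I expect to be the main obstacle is this factorization: one must be certain that the hypothesis $N\subseteq ker(Inn(h))$ collapses entire $N$-orbits of $Q$ (not merely the action of generators of $N$ on individual points) and that the induced set map $\overline{h}$ inherits the quandle operations. Both follow cleanly from Theorem~\ref{epifunctor} together with the surjectivity of $c_N$, but they are where the augmented-quandle machinery does the real work; the remainder of the argument is essentially a diagram chase plus the elementary observation relating $ker(Inn(c_N))$ to setwise stabilizers of $N$-orbits.
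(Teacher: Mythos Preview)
The paper does not actually prove this theorem: it is quoted in the Background section as a result of Bunch et al.\ \cite{Bunch}, with no argument supplied. So there is nothing in the present paper to compare your proposal against.

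That said, your argument is correct and self-contained. The identification of $ker(Inn(c_N))$ with $\bigcap_{x\in Q}G_{[x]}$ via the description $Inn(c_N)(g):[q]\mapsto[g(q)]$ is exactly right, and follows from the augmented-quandle functoriality of Theorem~\ref{epifunctor} as you say. For the minimality of $ker(Inn(c_N))$ among realizable kernels containing $N$, your factorization step is the heart of the matter and is sound: the equation $h(n(q))=Inn(h)(n)(h(q))=h(q)$ for $n\in N\subseteq ker(Inn(h))$ does collapse entire $N$-orbits (not just generators), because every element of $N$ already lies in $ker(Inn(h))$, so there is no induction on word length to worry about. The induced map $\overline h$ is then a surjective quandle homomorphism by the standard two-line check, and functoriality of $Inn$ on epimorphisms gives $ker(Inn(c_N))\subseteq ker(Inn(h))$. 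Your self-identified ``main obstacle'' is therefore not an obstacle at all; the argument goes through cleanly.
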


\begin{theorem} \cite{Bunch} \label{or-factorization}
For any surjective quandle homomorphism $h:Q\twoheadrightarrow R$, if $N=ker(Inn(h))$ and $g_N$ is the canonical homomorphism from $Q$ to $Q/N$ then $h$ can be factored as $f(g_N)$, where $f:G/N\twoheadrightarrow R$ is a rigid quotient.
\end{theorem}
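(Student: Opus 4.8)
The plan is to construct $f$ directly on the $N$-orbits, check that it is a well-defined surjective quandle homomorphism, and then deduce rigidity from the first isomorphism theorem applied to the group homomorphism $Inn(h)$.

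First I would record the identity that drives the whole argument: since $(Q, Inn(Q), q \mapsto -\rhd q)$ and its counterpart for $R$ are augmented quandles and $h$ underlies a morphism of augmented quandles (Theorem~\ref{epifunctor}), one has, for every $\phi \in Inn(Q)$ and every $q \in Q$,
\[ h(\phi(q)) = \bigl(Inn(h)(\phi)\bigr)\bigl(h(q)\bigr), \]
which is verified by writing $\phi$ as a word in the symmetries $S_x^{\pm 1}$ and iterating $h(a \rhd b) = h(a) \rhd h(b)$ (equivalently $h(S_x(a)) = S_{h(x)}(h(a))$). Granting this, if $g_N(q) = g_N(q')$ then $q' = \phi(q)$ for some $\phi \in N = ker(Inn(h))$, so $h(q') = \bigl(Inn(h)(\phi)\bigr)(h(q)) = h(q)$ because $Inn(h)(\phi)$ is the identity; hence there is a unique function $f : Q/N \to R$ with $f \circ g_N = h$. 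That $f$ is a surjective quandle homomorphism is then routine: surjectivity is inherited from $h$, and for $\bar q, \bar r \in Q/N$ with $g_N$-preimages $q, r$ we get
\[ f(\bar q \rhd \bar r) = f\bigl(g_N(q) \rhd g_N(r)\bigr) = f\bigl(g_N(q \rhd r)\bigr) = h(q \rhd r) = h(q) \rhd h(r) = f(\bar q) \rhd f(\bar r). \]

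It then remains to see that $f$ is a rigid quotient, i.e.\ that $Inn(f) : Inn(Q/N) \to Inn(R)$ is an isomorphism. Since $g_N$, $f$, and $h$ are all surjective, the functor of Theorem~\ref{epifunctor} applies and yields $Inn(h) = Inn(f) \circ Inn(g_N)$. Surjectivity of $Inn(f)$ follows from surjectivity of $Inn(h)$, which in turn holds because every generator $S_{h(x)}$ of $Inn(R)$ equals $Inn(h)(S_x)$. For injectivity the crux is the identity $ker(Inn(g_N)) = N$: the inclusion $N \subseteq ker(Inn(g_N))$ holds because each element of $N$ acts trivially on the set $Q/N$ of $N$-orbits and so is sent by $Inn(g_N)$ to the identity, while $ker(Inn(g_N)) \subseteq ker(Inn(f) \circ Inn(g_N)) = ker(Inn(h)) = N$ gives the reverse inclusion. (Alternatively, one may invoke the constructive description $ker(Inn(c_N)) = N^Q$ together with the fact that $N$, being equal to $ker(Inn(h))$, is a realizable kernel and hence its own closure.) Consequently $Inn(g_N)$ identifies $Inn(Q/N)$ with $Inn(Q)/N$, and under this identification $Inn(f)$ is precisely the map $Inn(Q)/N \to Inn(R)$ induced by $Inn(h)$, which is injective by the first isomorphism theorem since $ker(Inn(h)) = N$. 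Hence $Inn(f)$ is a bijective group homomorphism and $f$ is a rigid quotient.

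I expect the one genuinely delicate point to be the well-definedness of $f$: getting the composition order of the symmetries right in $h(\phi(q)) = \bigl(Inn(h)(\phi)\bigr)(h(q))$, since once that identity is secured the rest is a diagram chase together with the first isomorphism theorem. A secondary thing worth checking is that the functoriality of $Inn$ is only ever applied to surjective homomorphisms, which is why one verifies surjectivity of $f$ before writing $Inn(h) = Inn(f) \circ Inn(g_N)$.
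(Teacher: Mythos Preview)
The paper does not supply a proof of this theorem: it is quoted from \cite{Bunch} as background (note the citation marker immediately after \texttt{\textbackslash begin\{theorem\}}), so there is no in-paper argument to compare against.

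Assessing your proposal on its own merits: the argument is correct. The equivariance identity $h(\phi(q)) = \bigl(Inn(h)(\phi)\bigr)(h(q))$ is exactly what the augmented-quandle functoriality of Theorem~\ref{epifunctor} encodes, and from it well-definedness of $f$ on $N$-orbits is immediate. Your rigidity argument is also sound: the chain $\ker(Inn(g_N)) \subseteq \ker\bigl(Inn(f)\circ Inn(g_N)\bigr) = \ker(Inn(h)) = N$ together with the obvious inclusion $N \subseteq \ker(Inn(g_N))$ pins down $\ker(Inn(g_N)) = N$, and then the first isomorphism theorem finishes. The parenthetical alternative you give---invoking Theorem~2.5 to get $\ker(Inn(g_N)) = N^Q = N$ since $N$ is by construction a realizable kernel---is cleaner and is in fact the route the paper itself implicitly relies on elsewhere (e.g.\ in identifying $Inn(Q/N)$ with $G/N$ in Theorem~3.4). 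Either way the logic closes. One cosmetic point: the theorem as printed writes $f:G/N\twoheadrightarrow R$, which is evidently a typo for $Q/N$; your proof correctly works with $Q/N$.
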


Here $Q/N$ is the set of orbits of $Q$ under the restriction of the action of $Inn(Q)$ on $Q$ to $N$ with the quandle structure $[q]\rhd [r] = [q\rhd r]$ with $[x]$ denoting the orbit of $x \in Q$.  We refer to such a quotient as an \textit{orbit quotient}.

We also observe that the proof of several results on connected quandles given by Ehrman et al. \cite{Ehrman} will carry a similar result without restricting the cardinality of the quandle in question to be finite:

In what follows, let $Q$ be a connected quandle and let $G=Inn(Q)$.  

We then have, by a well-know elementary result on group actions

\begin{proposition} \label{homogeneous}
There is an equivariant bijection between $Q$ and $H\backslash G$, the set of right cosets of $H$.
\end{proposition}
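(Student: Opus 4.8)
The plan is to recognize Proposition \ref{homogeneous} as an instance of the orbit--stabilizer correspondence, the only quandle-theoretic input being the translation of ``connected'' into ``the action of $G = Inn(Q)$ on $Q$ is transitive.'' Recall that each symmetry $S_x$ is a bijection $Q\to Q$, and the $S_x$ generate $G\leq Aut(Q)$, so $G$ acts on $Q$; I will write $q\cdot g$ for this action, normalized so that $q\cdot S_x = q\rhd x$, which makes it a \emph{right} action. Connectedness of $Q$ says precisely that this action has a single orbit, i.e.\ is transitive.

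First I would fix a basepoint $q_0\in Q$ and let $H=\{g\in G : q_0\cdot g = q_0\}$ be its stabilizer. (Any two points of $Q$ have conjugate stabilizers since $G$ acts transitively, so the isomorphism type of $H$, and of the coset space $H\backslash G$, does not depend on this choice.) Next I would define $\phi:H\backslash G\to Q$ by $\phi(Hg)=q_0\cdot g$. Well-definedness and injectivity are a single computation: $Hg=Hg'$ iff $gg'^{-1}\in H$ iff $q_0\cdot(gg'^{-1})=q_0$ iff $q_0\cdot g = q_0\cdot g'$, using that each element of $G$ acts as a bijection. Surjectivity of $\phi$ is exactly transitivity of the action, i.e.\ connectedness of $Q$. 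Hence $\phi$ is a bijection. Finally, equipping $H\backslash G$ with the right $G$-action $(Hg)\cdot g_1 = H(gg_1)$, one checks $\phi\bigl((Hg)\cdot g_1\bigr)=q_0\cdot(gg_1)=(q_0\cdot g)\cdot g_1=\phi(Hg)\cdot g_1$, so $\phi$ is $G$-equivariant, which completes the argument.

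There is essentially no hard step here; the content is bookkeeping about conventions. The one point deserving care is fixing the side on which $Inn(Q)$ acts and correspondingly using right cosets $H\backslash G$ rather than left cosets, since the symmetries $S_x(y)=y\rhd x$ naturally yield a right action (with the opposite convention one would instead obtain $G/H$). It is also worth stating explicitly that this proposition concerns only the $G$-set structure of $Q$ and says nothing yet about the quandle operation; transporting $\rhd$ through $\phi$ to a description purely in terms of cosets and distinguished elements of $G$ is a separate matter, presumably taken up in the sequel.
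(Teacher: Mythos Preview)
Your proof is correct and is precisely the orbit--stabilizer argument the paper has in mind: the paper does not write out a proof at all but simply prefaces the proposition with ``by a well-known elementary result on group actions,'' and your argument is exactly that result specialized to the transitive action of $G=Inn(Q)$ on the connected quandle $Q$. Your remarks about conventions (right action, right cosets) and about the proposition addressing only the $G$-set structure are apt and align with how the paper proceeds in the subsequent theorem.
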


%This is messed up and needs to be explained better:

\begin{theorem} \label{connstructure}
Fix an element $q\in Q$, let $H < G$ be its stabilizer, and identify $Q$ with the set of right $H$ cosets, $H\backslash Q$ by the equivariant map of Proposition \ref{homogeneous}. 
Let $|\cdot |$ be the augmentation map $|\cdot|:H\backslash G \rightarrow G$ that is the map which assigns to $Hg$ the map $(- \rhd Hg) \in G$. Then letting $\eta =|H|$ we have $\eta \in Z(H)$ where $Z(H)$ is the center of $H$. Further, $G$ is generated by $\{ g^{-1}\eta g \; | \; g\in G\}$ and 
the action of $G$ on $H\backslash G$ is faithful. Moreover, the quandle operation is given in terms of stabilizer cosets by $Hg \rhd H\gamma = Hg\gamma^{-1}\eta \gamma$.
\end{theorem}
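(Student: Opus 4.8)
The plan is to carry out everything inside the augmented-quandle structure on $Q$ and then transport it along the equivariant identification of Proposition \ref{homogeneous}. By Theorem \ref{epifunctor}, $(Q,G,\,q\mapsto -\rhd q)$ is an augmented quandle; writing the $G$-action on $Q$ on the right and $|x| := S_x = (-\rhd x)\in G$, this packages the three identities $x\rhd y = x\cdot|y|$, $\;|x\cdot g| = g^{-1}|x|\,g$ for all $g\in G$, and $x\cdot|x| = x$ — the middle one being self-distributivity (iii) rephrased, the last one idempotence (i). Under the bijection $Q\cong H\backslash G$ we have $x\leftrightarrow Hg$ exactly when $x = q\cdot g$, so the base point corresponds to $H = He$, right multiplication on $H\backslash G$ matches the $G$-action, and $\eta = |H| = |q| = S_q$.

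Then $\eta\in Z(H)$ is immediate: $S_q(q) = q\rhd q = q$ shows $\eta\in G_q = H$ (equivalently $H\eta = H$ since $q\cdot|q| = q$), while for $h\in H$, $h$ fixes $q$ and equivariance gives $\eta = |q| = |q\cdot h| = h^{-1}\eta h$, so $\eta$ commutes with every element of $H$. For the generation statement, $G = Inn(Q)$ is by definition generated by $\{S_x : x\in Q\}$, and since the action is transitive (connectedness) each $x$ is $q\cdot g$ for some $g$, whence $S_x = |q\cdot g| = g^{-1}\eta g$; thus the generating set is exactly $\{g^{-1}\eta g : g\in G\}$.

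For faithfulness, recall $Inn(Q)$ is a subgroup of $\mathrm{Sym}(Q)$ by definition, so it acts faithfully on $Q$, hence faithfully on $H\backslash G$ via the equivariant bijection; equivalently, $\bigcap_{g\in G}g^{-1}Hg = \bigcap_{x\in Q}G_x$ is the kernel of the $G$-action on $Q$ and therefore trivial. \textbf{This is the only spot where the finiteness assumption of Ehrman et al.\ might look necessary, and the content of the restatement is precisely that it is not.} Finally, for the quandle operation take cosets $Hg, H\gamma$ representing $q\cdot g, q\cdot\gamma$; then
\[ (q\cdot g)\rhd(q\cdot\gamma) = (q\cdot g)\cdot|q\cdot\gamma| = (q\cdot g)\cdot(\gamma^{-1}\eta\gamma) = q\cdot(g\gamma^{-1}\eta\gamma), \]
which corresponds to the coset $Hg\gamma^{-1}\eta\gamma$, so $Hg\rhd H\gamma = Hg\gamma^{-1}\eta\gamma$ (well-definedness being automatic from this derivation).

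\textbf{Main obstacle.} No single step is hard; the real work is bookkeeping with conventions. One needs to confirm that the equivariant bijection of Proposition \ref{homogeneous} really does intertwine the $G$-action on $Q$ with right multiplication on $H\backslash G$ (orbit--stabilizer, transitivity coming from connectedness), and to keep left/right and composition conventions straight so that equivariance appears as $|x\cdot g| = g^{-1}|x|g$ and not its inverse — the opposite convention would turn the last formula into $Hg\rhd H\gamma = Hg\gamma\eta\gamma^{-1}$. Once the conventions are fixed, every assertion is a one-line consequence of the augmented-quandle axioms together with the definition of $Inn(Q)$, and at no point is finiteness of $Q$ used.
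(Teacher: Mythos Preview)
Your proof is correct and matches the paper's approach: the paper simply defers to Ehrman et al.\ Theorem~4.3 for the bulk of the argument and adds only the one-line observation that faithfulness is immediate because $G=Inn(Q)$ sits inside $Aut(Q)$, which is exactly your faithfulness argument. What you have done is spell out in augmented-quandle language the computations that the citation hides, and your emphasis that finiteness plays no role is precisely the point the authors are making by restating Ehrman et al.'s result without the cardinality hypothesis.
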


\begin{proof}
The proof is essentially identical to the proof of Ehrman et al. \cite{Ehrman} Theorem 4.3.  The faithfulness of the action is immediate from the fact the $G$ is a subgroup of $Aut(Q)$.
\end{proof}

Similary omitting cardinality restrictions, the proof of Ehrman et al. \cite{Ehrman} Theorem 4.5 gives

\begin{theorem} \label{connconstruct}
Let $H < G$ be a group with specified subgroup $H$, and $\eta \in Z(H)$ and element of the center of the subgroup such that $\{ g^{-1}\eta g \; | \; g \in G \}$ generates $G$. Then $H\backslash G$ is a connected quandle under the operation $Hg \rhd H\gamma : = Hg\gamma^{-1}\eta \gamma$.  If, moreover, the action of $G$ on $H\backslash G$ is faithful, then $Inn(H\backslash G) = G$, otherwise $Inn(H\backslash G) \cong G/N_{H\backslash G}$ where 
\[ N_{H\backslash G} := \{ g\in G \; |\; \forall \gamma \in G H\gamma = H\gamma g \; \} \]
\end{theorem}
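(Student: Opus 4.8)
The plan is to follow the argument of Theorem 4.5 of Ehrman et al. \cite{Ehrman}, which is purely algebraic and nowhere invokes finiteness, so that the only genuine work is to confirm each step is order-agnostic while carrying out the four checks it requires: that $(H\backslash G,\rhd)$ is a quandle, that it is connected, that $\mathrm{Inn}(H\backslash G)$ is the image of $G$ under the evident translation map into $\mathrm{Sym}(H\backslash G)$, and that the kernel of that map is precisely $N_{H\backslash G}$.

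First I would verify the quandle axioms directly from the formula $Hg\rhd H\gamma = Hg\gamma^{-1}\eta\gamma$. Well-definedness needs two observations: replacing $g$ by $hg$ with $h\in H$ leaves $Hg\gamma^{-1}\eta\gamma$ unchanged since $Hh=H$, and replacing $\gamma$ by $h\gamma$ leaves it unchanged since $\eta\in Z(H)$ forces $\gamma^{-1}h^{-1}\eta h\gamma = \gamma^{-1}\eta\gamma$. Idempotence is $Hgg^{-1}\eta g = H\eta g = Hg$ because $\eta\in H$; right-invertibility holds with $Hg\rhd^{-1}H\gamma := Hg\gamma^{-1}\eta^{-1}\gamma$; and right-distributivity is the one genuinely computational axiom, a short cancellation in which a factor $\delta^{-1}\eta\delta$ meets its inverse $\delta^{-1}\eta^{-1}\delta$.

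Next I would identify the inner automorphism group. The symmetry $S_{H\gamma}$ is the right translation $R_{\gamma^{-1}\eta\gamma}\colon Hg\mapsto Hg\gamma^{-1}\eta\gamma$, and since $R_aR_b=R_{ba}$ the assignment $a\mapsto R_a$ is an anti-homomorphism $\rho\colon G\to\mathrm{Sym}(H\backslash G)$ whose image is a subgroup anti-isomorphic, hence isomorphic, to $G/\ker\rho$. Because the hypothesis says $\{\gamma^{-1}\eta\gamma : \gamma\in G\}$ generates $G$, the symmetries $S_{H\gamma}=\rho(\gamma^{-1}\eta\gamma)$ generate $\rho(G)$, so $\mathrm{Inn}(H\backslash G)=\rho(G)$; and since $R_a(He)=Ha$, the group $\rho(G)$ is already transitive on $H\backslash G$, which yields connectedness. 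Unwinding definitions, $\ker\rho=\{a\in G : Hxa=Hx \text{ for all } x\in G\}$, which is exactly $N_{H\backslash G}$, so $\mathrm{Inn}(H\backslash G)\cong G/N_{H\backslash G}$ in general; when the action of $G$ on $H\backslash G$ is faithful this kernel is trivial, $\rho$ is injective, and it realizes the asserted identification $\mathrm{Inn}(H\backslash G)=G$, compatible with the augmentation $Hg\mapsto g^{-1}\eta g$ of Theorem \ref{connstructure}.

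I do not anticipate a serious obstacle. The two points needing care are the left/right coset bookkeeping, so that the stated formula $Hg\gamma^{-1}\eta\gamma$ (rather than its mirror image) appears and the translation map comes out as an anti-homomorphism rather than a homomorphism, and the distributivity verification, which is the longest of the four axiom checks but still a one-line cancellation; everything else is immediate from $\eta\in Z(H)$ and $\eta\in H$. The reason to give a proof at all, rather than simply cite \cite{Ehrman}, is only to record that nothing above uses $|Q|<\infty$ — finiteness enters \cite{Ehrman} solely in its enumeration corollaries, not in this structural construction.
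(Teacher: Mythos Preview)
Your proposal is correct and follows exactly the approach the paper takes: the paper gives no proof for this theorem beyond the remark ``Similarly omitting cardinality restrictions, the proof of Ehrman et al.\ \cite{Ehrman} Theorem 4.5 gives\ldots,'' and your plan is precisely to rerun that argument while noting finiteness is never used. Your sketch of the details (well-definedness via $\eta\in Z(H)$, the axiom checks, the identification $S_{H\gamma}=R_{\gamma^{-1}\eta\gamma}$, and the kernel computation) is accurate and in fact supplies more than the paper does.
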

%Theorem
%\begin{theorem} \textit{Ehrman.}
%Suppose for groups $G$ and $H$, we have $H \subset G \subset \mathfrak{S}_n$, $\frac{|G|}{|H|}=n$. Let $g_1,g_2,...,g_n$ be coset representatives of $H$ in $G$. Suppose also that $G$ is generated by $g_1^{-1}|Hh|g_1,g_2^{-1}|Hh|g_2,...,g_n^{-1}|Hh|g_n$ for some $|Hh| \in Z(H)$. Then $Hg_i \rhd Hg_j = Hg_ig_j^{-1}|Hh|g_j$ defines a connected quandle with $n$ elements.

\section{Results}

Using the definitions and results of the preceding section as our primary tools, we now consider the question of when one surjective quandle homomorphism between connected quandles, $g:Q\rightarrow R_2$,
factors through another $h:Q\rightarrow R_1$ in the sense that there is a quandle homomorphism $\phi:R_1\rightarrow R_2$ such that $g = \phi(h)$.

%Setting: Significance of connected quandles and surjective maps
In view of Theorem \ref{or-factorization} the quandle homomorphisms $g$ and $h$ factor as shown by the solid arrows of the diagram below
 
$$\begin{tikzpicture}
  \matrix (m) [matrix of math nodes,row sep=1em,column sep=1em,minimum width=0.5em]
  {
    R_{1} \\
     Q/N_{1}  \\
     Q & Q/N_{2} & R_{2} \\};
     
  \path[-stealth]
  (m-2-1) edge node [left] {} (m-1-1)
         edge [dashed] node [below] {}(m-3-2)
 (m-3-1) edge node [left]{}(m-2-1)
       edge node [below] {} (m-3-2)
(m-3-2) edge node [below]{} (m-3-3)
(m-1-1) edge  [dashed] node [below]{} (m-3-3);
\end{tikzpicture}$$

We give conditions for the existence of a homomorphism between $Q/N_{1}$ and $Q/N_{2}$, as shown in the diagram, then extend the problem to rigid quotients, providing conditions for a homomorphism from $R_{1}$ to $R_{2}$. 
%This is simplified by focusing on surjective maps between connected quandles.

Our first result does not require connectedness as a hypothesis.  Its proof turns largely on the following elementary observation about group actions, wherein the orbit of $x_i$ under the action of $G$ is denoted $x_iG$.

\begin{lemma} \label{sameorbit} Suppose the group $G$ acts on the set $X$. Let $x_1, x_2 \in X$. Then $x_1G=x_2G$ iff $x_1=x_2 \cdot g$ for some $g \in G$.
\end{lemma}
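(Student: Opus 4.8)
The plan is to unwind the definition of the orbit, $x_iG = \{\, x_i\cdot g : g\in G\,\}$, and establish the two implications separately. The only background fact I will lean on is the standard one that the orbits of a group action partition $X$ (equivalently, that $x\cdot e = x$ and $(x\cdot g)\cdot h = x\cdot(gh)$), so membership in one's own orbit is automatic and two orbits coincide as soon as they meet.

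For the forward direction, assume $x_1G = x_2G$. Since $x_1 = x_1\cdot e \in x_1G = x_2G$, the definition of $x_2G$ furnishes some $g\in G$ with $x_1 = x_2\cdot g$, which is exactly the conclusion. For the backward direction, assume $x_1 = x_2\cdot g$ for some $g\in G$. I would prove the two inclusions: any element of $x_1G$ has the form $x_1\cdot h = (x_2\cdot g)\cdot h = x_2\cdot(gh) \in x_2G$, so $x_1G\subseteq x_2G$; and since $x_1 = x_2\cdot g$ gives $x_2 = x_1\cdot g^{-1}$, the symmetric computation gives $x_2G\subseteq x_1G$. Hence $x_1G = x_2G$. (Alternatively, once $x_1\in x_2G$ one may simply cite that distinct orbits are disjoint to conclude $x_1G = x_2G$ in one line.)

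There is essentially no obstacle here: the lemma is elementary, and the only point requiring any care is the correct use of the action axioms $x\cdot e = x$ and $(x\cdot g)\cdot h = x\cdot(gh)$ when verifying the inclusions. The role of the lemma in the paper is organizational — it isolates the fact that "lying in the same $G$-orbit" is witnessed by a single group element — so the proof can and should be kept to a couple of lines.
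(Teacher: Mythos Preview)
Your proof is correct and is exactly the standard verification one would expect; the paper in fact gives no proof at all for this lemma, simply calling it an ``elementary observation about group actions,'' so there is nothing to compare against beyond noting that your argument fills in precisely the routine details the authors omitted.
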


%\begin{proof}
%First, suppose $a \cdot H = b \cdot H$. Let $h \in H$. Then $a \cdot H = (a \cdot h) \cdot H$ by properties of cosets. Hence, $(a \cdot h) \cdot H = b \cdot H$, which implies $a \cdot h = b$.

%Now, suppose that $a \cdot h = b$ for some $h \in H$. Since $b \in b \cdot H$, $a \cdot h$ must also be contained in $b \cdot H$, which implies $a \cdot h \cdot H = a \cdot H \subset b \cdot H$. Then, we can take the inverse of our assumption, precisely, $a \cdot h \cdot h^{-1} = b \cdot h^{-1}$. Hence, $a = b \cdot h^{-1}$. Proceeding similarly, since $a \in a \cdot H$, $b \cdot h^{-1} \in a \cdot H$. Because $h^{-1} \in H$, we can say that $b \dot h^{-1} \in b \cdot H$. Therefore, $b \cdot H \subset a \cdot H$. Thus, $a \cdot H = b \cdot H$. 
%\end{proof}

Consider the following:

\begin{theorem}
Let $g_{N_1}: Q \to Q/N_1$ and $g_{N_2}: Q \to Q/N_2$ be two orbit quotients of $Q$ by realizable kernels $N_i$ ($i = 1, 2$).  Denoting the orbit of $q\in Q$ under the action of $N_i$ by $q\cdot N_i$, these are given by $g_{N_i}(q) = q\cdot N_i$. Then there is a quandle homomorphism $\Omega : Q/N_1\to Q/N_2$ such that the diagram
$$\begin{tikzpicture}[node distance=2cm, auto]
  \node (C) {$Q/N_1$};
  \node (P) [below of=C] {$Q$};
  \node (Q) [right of=P] {$Q/N_2$};
  \draw[dashed, ->] (C) to node {$\Omega$} (Q);
  \draw[->>] (P) to node {$g_{N_1}$} (C);
  \draw[->>] (P) to node [swap] {$g_{N_2}$} (Q);
\end{tikzpicture}$$

\noindent commutes if and only if $N_1 \subseteq N_2$.

\end{theorem}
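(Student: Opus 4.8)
The plan is to prove the two implications separately, with the "if" direction being a direct construction and the "only if" direction following from a factorization/orbit argument.

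For the "if" direction, suppose $N_1 \subseteq N_2$. I would define $\Omega$ on orbits by $\Omega(q \cdot N_1) = q \cdot N_2$. The first task is well-definedness: if $q \cdot N_1 = q' \cdot N_1$, then by Lemma \ref{sameorbit} there is $g \in N_1$ with $q' = q \cdot g$; since $N_1 \subseteq N_2$ we have $g \in N_2$, so $q' \cdot N_2 = q \cdot N_2$ again by Lemma \ref{sameorbit}, and $\Omega$ is well-defined. That $\Omega$ is a quandle homomorphism is then immediate from the definition of the orbit-quotient operation: $\Omega((q\cdot N_1)\rhd(r\cdot N_1)) = \Omega((q\rhd r)\cdot N_1) = (q\rhd r)\cdot N_2 = (q\cdot N_2)\rhd(r\cdot N_2) = \Omega(q\cdot N_1)\rhd\Omega(r\cdot N_1)$. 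Commutativity of the triangle, $\Omega\circ g_{N_1} = g_{N_2}$, holds by construction. (Surjectivity of $\Omega$ is also clear, though not demanded by the statement.)

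For the "only if" direction, suppose such an $\Omega$ exists with $\Omega\circ g_{N_1} = g_{N_2}$, and take $g \in N_1$ and any $q \in Q$. Since $g \in N_1$ acts on $Q$, the element $q\cdot g$ lies in the same $N_1$-orbit as $q$, i.e. $g_{N_1}(q\cdot g) = g_{N_1}(q)$. Applying $\Omega$ and using commutativity gives $g_{N_2}(q\cdot g) = g_{N_2}(q)$, so $q\cdot g$ and $q$ lie in the same $N_2$-orbit; by Lemma \ref{sameorbit} there is $n \in N_2$ with $q\cdot g = q\cdot n$. This shows $g$ and $n$ agree on $q$, but to conclude $g \in N_2$ I need this for a single well-chosen identification, so here I would invoke the structure of realizable kernels: $N_2 = \ker(Inn(g_{N_2}))$, equivalently (by the Bunch et al. description quoted before Theorem \ref{connstructure}) $N_2 = \bigcap_{x\in Q} G_{[x]_{N_2}}$ where $G = Inn(Q)$ and $[x]_{N_2}$ is the $N_2$-orbit of $x$. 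The displayed computation shows precisely that $g$ stabilizes every $N_2$-orbit, i.e. $g \in G_{[x]_{N_2}}$ for all $x$, hence $g \in N_2$. Since $g \in N_1$ was arbitrary, $N_1 \subseteq N_2$.

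The main obstacle is the last step of the "only if" direction: passing from "$g$ fixes every $N_2$-orbit setwise" to "$g \in N_2$." This is exactly where the hypothesis that $N_2$ is a \emph{realizable} kernel (not merely a normal subgroup) is used, via the identity $N^Q = \bigcap_{x} G_{[x]}$ from the quoted theorem of Bunch et al. If one only knew $N_2$ normal, one could only conclude $g \in N_2^Q$, the realizable closure, and the statement would fail; so the proof must be careful to cite that the $N_i$ are realizable kernels and to correctly match the orbit notation $q\cdot N_i$ in this paper's statement with the orbit notation $[x]$ in the cited theorem. Everything else is a routine unwinding of definitions.
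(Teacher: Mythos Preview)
Your proof is correct. The forward (``if'') direction is essentially identical to the paper's: both define $\Omega(q\cdot N_1)=q\cdot N_2$, check well-definedness via Lemma~\ref{sameorbit}, and verify the homomorphism condition from the orbit-quotient operation. For the converse you take a different route. The paper argues by contradiction, choosing an element ``$x\in N_1$, $x\notin N_2$'' and manipulating expressions such as $\Omega(x\cdot N_1)=\Omega(N_1)=N_2$; this blurs the distinction between group elements of $N_1\leq Inn(Q)$ and quandle elements of $Q$, and never visibly uses that $N_2$ is realizable. You instead show directly that every $g\in N_1$ stabilizes each $N_2$-orbit setwise and then invoke the Bunch et al.\ description $N_2=N_2^{Q}=\bigcap_{x\in Q}G_{[x]_{N_2}}$. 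This buys you a clean separation of concerns: the realizability hypothesis on $N_2$ is used at exactly one identified step, and you correctly observe that without it one would obtain only $N_1\subseteq N_2^{Q}$. The cost is reliance on the cited closure theorem rather than a purely elementary contradiction, but given that the paper's elementary argument is notationally shaky at precisely the step where that hypothesis should enter, your approach is the more robust of the two.
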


\begin{proof}
Plainly for the diagram to commute, $\Omega : Q/N_1\rightarrow Q/N_2$ must be given by $\Omega(q \cdot N_1) = q \cdot N_2$. The proof of the theorem thus consists in showing that this formula give a well-defined quandle homomorphism whenever $N_1 \subseteq N_2$, and conversely.

First, we will prove that $\Omega$ is well-defined. Let $a \cdot N_1, b \cdot N_1 \in Q/N_1$. Then $\Omega(a \cdot N_1)=a \cdot N_2$ and $\Omega(b \cdot N_1)=b \cdot N_2$. Suppose $a \cdot N_1=b \cdot N_1$. Then by Lemma \ref{sameorbit} we have $a = b \cdot n$ for some $n \in N_1$. Since $N_1 \subset N_2$, we have  $n \in N_2$ so  $a = b \cdot n$ for some $n \in N_2$. By Lemma \ref{sameorbit} we then have $a \cdot N_2=b \cdot N_2$. Since $\Omega(a \cdot N_1)=a \cdot N_2$ and $\Omega(b \cdot N_1)=b \cdot N_2$. Thus, we have $\Omega(a \cdot N_1)=\Omega(b \cdot N_1)$.

Let the orbit $q \cdot N_1, q' \cdot N_1 \in Q/{N_1}$. Then $\Omega(q \cdot N_1) = q\cdot N_2$ and $\Omega(q' \cdot N_1)=q' \cdot N_2$.  Since $g_{N_{2}}$ is a quandle homomorphism, $q \cdot N_2 \rhd q' \cdot N_2 = (q  \rhd q') \cdot N_2$, Therefore,
$$
\Omega(q \cdot N_1) \rhd \Omega(q' \cdot N_1) = q \cdot N_2 \rhd q' \cdot N_2 = (q \rhd q') \cdot N_2 = \Omega((q \rhd q') \cdot N_1).
$$ 
Hence, $\Omega$ is a homomorphism.

Suppose $\Omega$ is a well-defined homomorphism. Assume, for the purpose of contradiction, that $N_1 \nsubseteq N_2$. Let $x \cdot N_1 \in Q/N_1$ where $x \in N_1$ but $x \notin N_2$. Then $\Omega(xN_1)=xN_2$. But $\Omega(x \cdot N_1)=\Omega(N_1)=N_2$, which implies $x\cdot N_2=N_2$ and $x \in N_2$. Thus, by contradiction, $N_1 \subseteq N_2$.
\end{proof}

The previous result did not depend on the quandles involve being connected. 
We now turn to results in which connectededness is important, beginning with characterizing orbit quotients of connected quandles in terms of the presentation as right cosets of a stablilizer subgroup given in Theorem \ref{connstructure}.  Again, our result requires a lemma from the theory of group actions:

\begin{lemma} Let $S$ be a set equipped with a right action of a group $\Gamma$, $f: S\times\Gamma\rightarrow S$ and $N$ a normal subgroup of a group $\Gamma$. Then the map $f_N: S/N \times \Gamma/N \rightarrow S/N$ defined by $f_N(s\cdot N, gN)=(s \cdot g) \cdot N$ is a well-defined group action.  And, moreover, when this action is lifted to an action of $\Gamma$ along the quotient map $c:\Gamma \rightarrow \Gamma/ N$, the map $S \rightarrow S/N$ carrying each element to its orbit under the restriction of the action to $N$ is $\Gamma$-equivariant, that is, the diagram
$$\begin{tikzpicture}
  \matrix (m) [matrix of math nodes,row sep=3em,column sep=4em,minimum width=2em] {
     S \times \Gamma & & S\\
     S/N \times \Gamma & S/N \times \Gamma/N & S/N \\};
  \path[-stealth]
    (m-1-1) edge node [above] {$f$} (m-1-3)
            edge node [left] {} (m-2-1)
    (m-2-2) edge node [below] {$f_N$} (m-2-3)
    (m-1-3) edge node [right] {} (m-2-3)
    (m-2-1) edge node [below] {$Id \times c$} (m-2-2);
\end{tikzpicture}
$$

\noindent commutes.
\end{lemma}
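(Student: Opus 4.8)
The plan is to verify the assertion in three stages: first that $f_N$ is well-defined, then that it satisfies the axioms of a right group action, and finally that the displayed square commutes (which is precisely the equivariance claim).

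For well-definedness I would start from representatives: suppose $s\cdot N = s'\cdot N$ in $S/N$ and $gN = g'N$ in $\Gamma/N$. Applying Lemma \ref{sameorbit} to the restriction of the action to $N$ yields some $n\in N$ with $s' = s\cdot n$, and from $gN=g'N$ we get some $m\in N$ with $g' = gm$. Using that $f$ is a right action, $s'\cdot g' = (s\cdot n)\cdot(gm) = s\cdot(ngm) = (s\cdot g)\cdot(g^{-1}ng\,m)$. This is the one place where normality enters: $N\trianglelefteq\Gamma$ gives $g^{-1}ng\in N$, hence $g^{-1}ng\,m\in N$, and therefore $(s'\cdot g')\cdot N = (s\cdot g)\cdot N$, as required.

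Next I would check the action axioms. The identity law $f_N(s\cdot N,\, eN) = (s\cdot e)\cdot N = s\cdot N$ is immediate. For compatibility, for $g,h\in\Gamma$ one computes $f_N\!\big(f_N(s\cdot N,gN),hN\big) = \big((s\cdot g)\cdot h\big)\cdot N = \big(s\cdot(gh)\big)\cdot N = f_N\big(s\cdot N,(gN)(hN)\big)$, using only the right-action axiom for $f$ and the fact that multiplication in $\Gamma/N$ is induced from that in $\Gamma$. Finally, writing $\pi\colon S\to S/N$ for the orbit map and observing that lifting $f_N$ along $c\colon\Gamma\to\Gamma/N$ means $\Gamma$ acts on $S/N$ by $(s\cdot N)\cdot\gamma := (s\cdot\gamma)\cdot N$, I would chase an element $(s,\gamma)$ of $S\times\Gamma$ both ways around the square: the top-then-right route gives $\pi(f(s,\gamma)) = (s\cdot\gamma)\cdot N$, while the left-then-bottom route gives $f_N\big(s\cdot N,\gamma N\big) = (s\cdot\gamma)\cdot N$; equality of these two expressions is exactly commutativity of the diagram, i.e.\ the $\Gamma$-equivariance of $\pi$.

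The only non-formal point is the appeal to normality of $N$ in the well-definedness step; everything else is bookkeeping with the right-action axioms and the definition of the quotient group. The one thing to be careful about is keeping all actions on the correct side throughout — it is a right action, which is why the conjugate $g^{-1}ng$ (rather than $gng^{-1}$) appears when moving $n$ past $g$ — though since $N$ is normal the precise form of the conjugate is immaterial.
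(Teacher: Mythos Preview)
Your proof is correct and follows essentially the same four-part structure (well-definedness, identity, associativity/compatibility, equivariance) as the paper's own argument. Your well-definedness step is in fact more transparent than the paper's: you make explicit the role of normality of $N$ via the conjugate $g^{-1}ng$, whereas the paper's chain of equalities leaves that passage implicit.
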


\begin{proof}
We denote the original group action $f$ by the infix $\cdot$.

\setlength\parindent{36pt}\textbf{Well-definition:}
Suppose for some $\sigma,\delta\in S$ and $g,h \in\Gamma$, $(\sigma\cdot N,gN)=(\delta\cdot N,hN)$, which implies $\sigma\cdot N=\delta\cdot N$ and $gN=hN$. Hence, by Lemma \ref{sameorbit}, $\sigma =\delta\cdot n$ and $g=h\cdot n'$ where $n, n' \in N$. Therefore, $f_N(\sigma\cdot N,gN) = (\sigma\cdot g)\cdot N = (\delta\cdot n\cdot h\cdot n')\cdot N=f_N((\delta\cdot\ n)\cdot N, (h n')N)= f_N(\delta\cdot N,hN)$. Thus $f_N(\sigma\cdot N,gN)=f_N(\delta\cdot N,hN)$.

\setlength\parindent{36pt}\textbf{Identity Property:}
Let $e$ be the identity element of $\Gamma$. Then $f_N(\sigma\cdot N,eN) = (\sigma\cdot e)\cdot N=\sigma\cdot(eN)=\sigma\cdot N$.

\setlength\parindent{36pt}\textbf{Associativity:}
We have $f(f(\sigma\cdot N,gN),hN)=f((\sigma\cdot g)\cdot N,hN)=((\sigma\cdot g)\cdot h)\cdot N= (\sigma\cdot (gh))\cdot N= f(\sigma\cdot N, ghN)$.

\setlength\parindent{36pt}\textbf{Equivariance:}
Having shown that $f_N$ is well-defined, this is immediate by construction.
\end{proof}

Now, let $Q$ be a connected quandle, $G=Inn(Q)$, $q\in Q$ and $H = G_q$ the stabilizer subgroup. As in Theorem \ref{connstructure}, we can identify $Q$ with $H \backslash G$ and give the quandle operation in terms of the augmentation value $\eta \in Z(H)$ of the trivial coset $H$.

\begin{theorem}
For $Q, G, q, H$ and $\eta$ as above, let $N$ be a realizable kernel, then the orbit quotient quandle $Q/N$ whose elements are the orbits of $Q$ under the restriction of the action of $G$ to $N$ is connected with $Inn(Q/N) \cong G/N$. 
Let $H_1:=(G/N)_{qN} =\{hN \mid qN \cdot hN = qN\}$ be the stabilizer of the image of $q$ in $Q/N$. Denoting the augmentation for either quandle by $| - |$, we then have
\begin{itemize}
    \item $H_1=HN/N$,
    \item $|H_1|=\eta N$,
    \item $|H_1|=\eta N \in Z(H_1)$,
    \item $|H_1gN|= g^{-1}\eta gN$,
    \item $G/N$ is generated by $\{ |H_1gN|\; | \; g\in G \}$
\end{itemize}
\end{theorem}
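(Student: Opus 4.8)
The plan is to do everything in the coset picture. Under the identification $Q\cong H\backslash G$ of Theorem \ref{connstructure}, I will identify the orbit quandle $Q/N$ with $HN\backslash G$ equipped with the $G/N$-action produced by the preceding lemma, and then read each bullet off this description together with the formula $Hg\rhd H\gamma = Hg\gamma^{-1}\eta\gamma$.

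First I would identify the $N$-orbits. The action of $G$ on $H\backslash G$ is right multiplication and $N\trianglelefteq G$, so the orbit of $Hg$ under the restriction of the action to $N$ is $HgN = HNg$, a right coset of the subgroup $HN\le G$. Hence $[Hg]\mapsto HNg$ is a well-defined bijection $Q/N\to HN\backslash G$, and one checks it is $G$-equivariant; applying the preceding lemma with $S = Q$ and $\Gamma = G$, this bijection intertwines the underlying $G$-action on the orbit quandle (which factors through $G/N$) with right multiplication of $G/N$ on $HN\backslash G$. In particular $Q/N$ is a single $G/N$-orbit, hence connected.

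Next I would pin down $\mathrm{Inn}(Q/N)$, which I expect to be the main obstacle; after this everything is bookkeeping in cosets. Since $N$ is a realizable kernel it is its own closure, so by the constructive description of $N^{Q}$ we have $N = N^{Q} = \ker(\mathrm{Inn}(g_{N}))$, where $g_{N}\colon Q\to Q/N$ is the orbit quotient. Because $g_{N}$ is surjective, $\mathrm{Inn}(g_{N})$ is surjective, whence $\mathrm{Inn}(Q/N)\cong G/N$; and since $g_{N}$ is $G$-equivariant, under this isomorphism the action of $\mathrm{Inn}(Q/N)$ on $Q/N$ is exactly the descended right-multiplication action above, with the symmetry $S_{[H\gamma]}$ corresponding to the class $(\gamma^{-1}\eta\gamma)N$. (Alternatively one could verify the hypotheses of Theorem \ref{connconstruct} for the datum $(G/N,\,HN/N,\,\eta N)$ and match the resulting quandle with $Q/N$, but one must still establish faithfulness of the action, which the realizable-kernel hypothesis delivers directly via the identity $N=\ker(\mathrm{Inn}(g_N))$.)

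Finally I would harvest the bullets. The element $qN$ corresponds to the trivial coset $HN$, whose stabilizer in $G/N$ is $\{gN : HNg = HN\} = HN/N$, so $H_{1} = HN/N$. Using the quandle operation, $S_{[Hg]}$ sends $[Hp] = HNp$ to $[Hp\rhd Hg] = HNp\,g^{-1}\eta g$, i.e. is right multiplication by $g^{-1}\eta g$; passing to $G/N$ this says $|H_{1} gN| = g^{-1}\eta g\,N$, and taking $g = e$ gives $|H_{1}| = \eta N$. Since $\eta\in Z(H)$ and every element of $H_{1} = HN/N$ has the form $hN$ with $h\in H$, the class $\eta N$ lies in $H_{1}$ and commutes with all of it, so $\eta N\in Z(H_{1})$. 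And since $G$ is generated by $\{g^{-1}\eta g : g\in G\}$ by Theorem \ref{connstructure}, its quotient $G/N$ is generated by the classes $g^{-1}\eta g\,N = |H_{1} gN|$, which is the last bullet.
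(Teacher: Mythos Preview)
Your argument is correct and follows essentially the same route as the paper: both work through the coset description of Theorem~\ref{connstructure}, compute the stabilizer as $HN/N$, identify the augmentation values via the formula $Hg\rhd H\gamma = Hg\gamma^{-1}\eta\gamma$, and observe that generation passes to the quotient. Two minor organizational differences are worth noting: you set up the identification $Q/N\cong HN\backslash G$ once at the outset and then read everything off it, whereas the paper proves each bullet more independently; and you obtain $|H_{1}|=\eta N$ as the $g=e$ case of the fourth bullet, while the paper invokes the functoriality of the augmentation (Theorem~\ref{epifunctor}) directly. Your treatment of the opening claim that $Q/N$ is connected with $\mathrm{Inn}(Q/N)\cong G/N$ is also more explicit than the paper's, correctly using the realizable-kernel hypothesis via $N=N^{Q}=\ker(\mathrm{Inn}(g_N))$.
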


\begin{proof}
For the first item, we begin by showing that $H_1 \subseteq HN/N$. Let $gN \in H_1$. That is, $qN \cdot gN = qN$. Hence, we must show that $g \in HN$. By Lemma \ref{connstructure}, $qN \cdot gN=(q \cdot g) \cdot N = q \cdot N$, which implies there exists $n \in N$ such that $q \cdot g \cdot n = q$. Therefore, $g \cdot n \in H$, so $g \in HN$. Thus, $H_1 \subseteq HN/N$.

Next, we will show $HN/N \subseteq H_1$. Let $hN \in HN/N$. We must show that $qN \cdot hN = qN$. Let $q \cdot h \cdot n \in q \cdot h \cdot N$. Then since $q \cdot h = q$, $q \cdot h \cdot n = q \cdot n \in q \cdot N$, so $q \cdot h \cdot N = q \cdot N$. Hence, by Lemma \ref{connstructure}, $qN \cdot hN = qN$. Therefore, $hN \subseteq H_1$, so $HN/N \subseteq H_1$ Thus, $H_1=HN/N$.

The second item follows from the functoriality of the augmented quandle structure of Theorem \ref{epifunctor}.

For the third, let $gN \in H_{1}$. Then we must show that $\eta N gN=gN \eta N$. Following Property 1, $|H_1| gN = \eta N gN$. Since $\eta \in Z(H) \subseteq G$ and $N \lhd G$, $\eta N gN = N \eta gN$. Hence $|H_1| gN = \eta N gN = N \eta g N = N g \eta N = gN \eta N = gN |H_1|$.

For the fourth,  following Property 1, we have $|H_1gN|= g^{-1}N\eta NgN=g^{-1}\eta N N gN=g^{-1}\eta NgN$ since $\eta N \in Z(H_1)$. Hence, because $N \lhd G$, $g^{-1}\eta N g N=g^{-1} \eta g N N=g^{-1} \eta g N$.

And finally, since $g^{-1}\eta g$ generates $G$, so by Property 3, $g^{-1}\eta gN=|H_1gN|$ generates $G/N$.
\end{proof}

%Let $N \lhd G=Inn(Q)$, where $Q$ is a connected quandle.
%Let $$H:=G_q= \{h \in G \mid q \cdot h = q\} $$ be the stabilizers of an element $q \in Q$, and $$H_1:=(G/N)_{qN} =\{hN \in G/N \mid qN \cdot hN = qN\}$$ be the stabilizers of an element $qN \in Q/N$ where $N$ is a realizable kernel.
    
%Given the following results of Ehrman et al \cite{Ehrman}, $|H|=\eta \in Z(H)$, $|Hg|=g^{-1}\eta g$, and $\{|Hg| : \forall g\in G \}$ generates $G$.

Our next results characterize rigid quotients of connected quandles in a manner analogous to the way in which Theorems \ref{connstructure} and \ref{connconstruct} characterized connected quandles. 

\begin{proposition} \label{easyprop}
Let $c:Q\rightarrow R$ be a rigid quotient map with $Q$ a connected quandle, $q \in Q$, and $G = Inn(Q) = Inn(R)$.  Then $R$ is connected and letting $H < G$ (resp. $K < G$) be the stabilizer of $q$ (resp. $c(q)$), and identifying $Q$ with $H\backslash G$ and $R$ with $K\backslash G$ as in Theorem \ref{connstructure}, the following hold:

\begin{itemize}
    \item $K < H$
    \item The augmentation values of the trivial cosets $H$ and $K$ are equal and denoting this element of $G$ by $\eta$, we have $\eta \in Z(K) \cap H$.
    \item The quandle operations on both quandles are given in terms of cosets by the formulas of Theorem \ref{connstructure}.
    \item $G$ acts faithfully on both $H\backslash G$ and $K\backslash G$
\end{itemize}
\end{proposition}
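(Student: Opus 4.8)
The plan is to obtain every item by applying Theorem~\ref{connstructure} once to $Q$ and once to $R$, the only substantive point being to set the two applications up so that they are compatible with $c$. To that end, identify $Inn(R)$ with $Inn(Q)=:G$ along the isomorphism $Inn(c)$ (available because $c$ is rigid); then the functoriality of the augmented-quandle structure in Theorem~\ref{epifunctor} says precisely that $c$ is $G$-equivariant, $c(x\cdot g)=c(x)\cdot g$ for $x\in Q$ and $g\in G$, and that $c$ carries the symmetry at $q$ to the symmetry at $c(q)$.

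First I would note that $R$ is connected: since $c$ is onto and $Q$ is a single $G$-orbit, $R=c(Q)=c(q\cdot G)=c(q)\cdot G$, so the $G$-orbit of $c(q)$ is all of $R$; hence Theorem~\ref{connstructure} applies to $R$ as well as to $Q$. The containment of stabilizers is then immediate: if $g\in H$ then $q\cdot g=q$, whence $c(q)\cdot g=c(q\cdot g)=c(q)$ and $g\in K$, so the stabilizer of $q$ sits inside the stabilizer of $c(q)$ --- the first item. Next, apply Theorem~\ref{connstructure} to $Q$ with base point $q$ and to $R$ with base point $c(q)$: this identifies $Q$ with $H\backslash G$ and $R$ with $K\backslash G$ equivariantly, supplies the quandle-operation formulas $Hg\rhd H\gamma=Hg\gamma^{-1}\eta_Q\gamma$ and $Kg\rhd K\gamma=Kg\gamma^{-1}\eta_R\gamma$ with $\eta_Q=|H|\in Z(H)$ and $\eta_R=|K|\in Z(K)$, and yields faithfulness of the $G$-action on each coset space (in fact immediate, since $Inn$ of a quandle is a subgroup of its automorphism group) --- the third and fourth items. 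Finally, $\eta_Q$ and $\eta_R$ coincide, because the functoriality of the augmentation (Theorem~\ref{epifunctor} again) sends the symmetry at $q$ to the symmetry at $c(q)$; writing $\eta$ for this common value, $\eta=\eta_Q\in Z(H)\subseteq H$ and $\eta=\eta_R\in Z(K)$, so $\eta\in Z(K)\cap H$ --- the second item.

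The step requiring care is the compatibility invoked at the outset: one must be sure that the equivariant bijection $R\cong K\backslash G$ can be chosen so that the triangle with $c$ commutes, equivalently that $Inn(c)$ genuinely intertwines the two $Inn$-actions and matches up the distinguished symmetries $|q|$ and $|c(q)|$. Both of these are part of the content of Theorem~\ref{epifunctor}, so once they are in hand there is no genuine obstacle left; the remaining assertions are a direct reading-off from Theorem~\ref{connstructure} applied to the two connected quandles.
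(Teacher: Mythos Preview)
Your proof is correct and follows the same line as the paper's: $G$-equivariance of $c$ (via the functoriality in Theorem~\ref{epifunctor}) yields the stabilizer containment, Theorem~\ref{connstructure} applied to each quandle gives the third and fourth items, and functoriality of the augmentation gives $\eta_Q=\eta_R$. You are more explicit than the paper about the connectedness of $R$ and about the compatibility of the two coset identifications with $c$, and---like the paper's own proof---you actually establish $H\subseteq K$ rather than the stated ``$K<H$'', which is evidently a typo in the proposition (as the subsequent theorems confirm, since $c(Hg)=Kg$ only makes sense when $H\subseteq K$).
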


\begin{proof}
First observe that a rigid quotient is necessarily a $G$-equivariant map.  From this it follows that the stabilizer of an element must be contained in the stablilizer of its image.  The third and fourth conclusions are immediate consequences of Theorem \ref{connstructure}, so it suffices to show the second.  The entire content of the second statement is that the two augmentation values are equal, since once this is established, it follows that $\eta$ lies in both $Z(K)$ and $Z(H) < H$ by Theorem \ref{connstructure}.  But this follows immediately from the functoriality of augmentations in $Inn(-)$ with respect to surjective quandle homomorphisms.
\end{proof}

Having characterized both rigid quotients and orbit quotients of connected quandles in terms of their presentation via stabilizer cosets, we turn at last to the question of when the upper square in our original diagram can be completed so that the one given quotient factors through the other. As the homomorphism from $Q/N_1$ to $Q/N_2$ is itself an orbit quotient, we simplify the situation to considering when a solid diagram of the form,

$$\begin{tikzpicture}
  \matrix (m) [matrix of math nodes,row sep=3em,column sep=4em,minimum width=2em] {
     R & R^\prime  \\
      Q & Q/N\\};
  \path[draw, ->>]
    (m-1-1) edge [dashed] node [above] {} (m-1-2)
    (m-2-2) edge node [right] {$c^\prime$} (m-1-2)
    (m-2-1) edge node [left] {$c$} (m-1-1)
    (m-2-1) edge node [below] {$g_N$} (m-2-2);
\end{tikzpicture}
$$

\noindent where $c$ and $c^\prime$ are rigid quotients and $g_N$ is an orbit quotient, admits the existence of the quandle homomorphism indicated by the dotted arrow to complete a commutative square.  Without loss of generality we will assume that $N$ is a realizable kernel, so that $Inn(Q/N) = Inn(R^\prime) = G/N$.

Now if the dotted arrow exists, it must factor as an orbit quotient, followed by a rigid quotient, giving rise to the dotted portion of the diagram below, in which $c_N$ must be a rigid quotient.

$$\begin{tikzpicture}
  \matrix (m) [matrix of math nodes,row sep=3em,column sep=2em,minimum width=0.5em]
  {
    K\backslash G & (K\backslash G) / N & R \\
     H\backslash G & (H\backslash G)/N  \\
     };
     
    \path[-stealth]
    (m-1-1) edge [dashed] node [below]{} (m-1-2)
      (m-2-1)  edge node [left]{$c$} (m-1-1)
                edge  node [below]{$g_N$} (m-2-2)
    (m-1-2) edge [dashed] node [below]{} (m-1-3)
    (m-2-2) edge [dashed] node [left]{$c_N$} (m-1-2)
            edge node [right]{$c'$} (m-1-3);

\end{tikzpicture}$$

The existence of the dotted arrow thus reduces to the condition that the induced map between the orbit quotients of $Q = H\backslash G$ and $R = K\backslash G$ be a rigid quotient and a condition for one rigid quotient of connected quandles to factor through another.  The first of these is given by

%Let $G=Inn(Q)$ and $N$ be a normal subgroup of $G$.

%Consider the diagram:
%$$\begin{tikzpicture}\matrix (m) [matrix of math nodes,row sep=3em,column sep=4em,minimum width=2em] {K\backslash G & (K\backslash G)/N\\ H\backslash G & (H\backslash G)/N  \\}; \path[draw, ->>(m-1-1) edge node [above] {} (m-1-2) (m-2-2) edge node [right] {} (m-1-2) (m-2-1) edge node [left] {} (m-1-1) (m-2-1) edge node [below] {} (m-2-2); \end{tikzpicture} $$

%DEFINE REALIZABLE CLOSURE EARLIER, NOTING THAT IT DEPENDS ON THE QUANDLE, AND DENOTE CLOSURE OF N WITH RESPECT TO Q BY $N^Q$, THEN USE $N^{H\backslash G$ AND $N^{K\backslash G$ IN THE THEOREM, NOT $\overline{N}$

\begin{theorem} In the diagram above,
$c_N:(H\backslash G ) /N \twoheadrightarrow (K\backslash G )/N $ is a rigid quotient iff $N=N^{K\backslash G}$.
%Let $N^{H\backslash G}=N$, and $\gamma \in N^{K\backslash G}$ where $$N^{K\backslash G}=\bigcap_{\omega}G_\omega$$ with $\omega$ as the orbit of $N$ action on $Q$. If $G$ acts faithfully on $K\backslash G$ and on $H\backslash G$, then $N^{H\backslash G}=N=N^{K\backslash G}$.
%Cite Bunch et al 
\end{theorem}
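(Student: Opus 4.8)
The plan is to identify $c_N$ as the map induced on $N$-orbits by the rigid quotient $c$, and then to read off its effect on inner automorphism groups by applying the functor $Inn(-)$ of Theorem \ref{epifunctor} to a commutative square of surjective quandle homomorphisms between connected quandles. To begin I would note that, since $c$ is a rigid quotient, it is $G$-equivariant (as in the proof of Proposition \ref{easyprop}), so the assignment $x\cdot N \mapsto c(x)\cdot N$ is a well-defined surjective quandle homomorphism $c_N\colon (H\backslash G)/N \twoheadrightarrow (K\backslash G)/N$; moreover $c_N$ is the unique quandle homomorphism satisfying $c_N\circ g_N = r_N\circ c$, where $g_N\colon H\backslash G\to (H\backslash G)/N$ and $r_N\colon K\backslash G\to (K\backslash G)/N$ denote the orbit quotients by $N$. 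All four maps are surjective, so $Inn(-)$ may be applied to this square.

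Next I would carry out the four identifications. Write $G = Inn(H\backslash G) = Inn(K\backslash G)$; the second equality is the faithfulness of the action of $G$ on $K\backslash G$ recorded in Proposition \ref{easyprop}, together with Theorem \ref{connconstruct}. Because $c$ is a rigid quotient, $Inn(c)$ is the identity of $G$ under these identifications: it fixes each generator $g^{-1}\eta g$, since $c$ carries $S_{Hg}$ to $S_{Kg}$ and both correspond to $g^{-1}\eta g$ (the augmentation values being equal by Proposition \ref{easyprop}). Because $N$ is, by the standing assumption, a realizable kernel for $H\backslash G$, the orbit quotient theorem for connected quandles gives $Inn\big((H\backslash G)/N\big) = G/N$ with $Inn(g_N)$ the canonical projection $\pi_N\colon G\to G/N$. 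Finally, the constructive description of the closure operation of Bunch et al. applied to the quandle $K\backslash G$ gives $ker\big(Inn(r_N)\big) = N^{K\backslash G}$, and since $Inn(r_N)$ is surjective this identifies $Inn\big((K\backslash G)/N\big)$ with $G/N^{K\backslash G}$ and $Inn(r_N)$ with the canonical projection $\pi_{N^{K\backslash G}}\colon G\to G/N^{K\backslash G}$.

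Applying $Inn(-)$ to $c_N\circ g_N = r_N\circ c$ then yields the commuting square of groups $Inn(c_N)\circ\pi_N = \pi_{N^{K\backslash G}}$. Since $\pi_N$ is surjective this forces $Inn(c_N)\colon G/N\to G/N^{K\backslash G}$ to be the canonical map $gN\mapsto g\,N^{K\backslash G}$, which is well defined precisely because $N\subseteq N^{K\backslash G}$ ($N^{K\backslash G}$ being the smallest realizable kernel of $K\backslash G$ containing $N$). This map is surjective with kernel $N^{K\backslash G}/N$, hence is an isomorphism if and only if $N = N^{K\backslash G}$. As $c_N$ is a rigid quotient exactly when $Inn(c_N)$ is an isomorphism, the equivalence follows. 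I expect the main obstacle to be the careful bookkeeping of the second paragraph — in particular verifying that $Inn(g_N)$ is the \emph{full} projection $G\to G/N$ rather than $G\to G/N^{H\backslash G}$ (which is exactly where the hypothesis that $N$ is a realizable kernel for $H\backslash G$ is used) and that $ker\,Inn(r_N)$ is precisely $N^{K\backslash G}$ and not some a priori smaller subgroup. Once the square of groups is correctly set up, the conclusion is a one-line kernel computation.
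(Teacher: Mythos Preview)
Your argument is correct and follows the same line as the paper's own proof: identify $Inn\big((H\backslash G)/N\big)$ with $G/N$ (using that $N$ is a realizable kernel for $H\backslash G$) and $Inn\big((K\backslash G)/N\big)$ with $G/N^{K\backslash G}$ (via the closure theorem of Bunch et al.), and conclude that $c_N$ is rigid exactly when these coincide. Your version is in fact more careful than the paper's, since you explicitly verify via the commuting square that $Inn(c_N)$ is the canonical surjection $G/N\to G/N^{K\backslash G}$ before computing its kernel, whereas the paper essentially asserts the equality of the two quotients without tracking the map.
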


\begin{proof}

Suppose $c$ is a rigid quotient. Then $Inn((H\backslash G)/N) = Inn(H\backslash G)/N = G/N$.  $Inn((H\backslash G)/N) = G/N =  Inn((K\backslash G)/N)$. Then $Inn(K\backslash G)/N = Inn(H\backslash G)/N $ so $N = N^{K\backslash G}$.

Suppose $N=N^{K\backslash G}$. Then $Inn(K\backslash G)/N^{K\backslash G} = G/N^{K\backslash G} = G/N = Inn(K\backslash G)/N$ so $Inn(K\backslash G)/N = Inn(H\backslash G)/N = G/N$.

%Let $n \in N$. Since $KgN=KgN^{K\backslash G}$ by definition, $Kgn=Kg\gamma$ for some $\gamma \in N^{K\backslash G}$. Then $n=\gamma$ because $G$ acts faithfully on $K\backslash G$. Hence, $n \in N^{K\backslash G}$, so $N \subseteq N^{K\backslash G}$.

%Let $\gamma \in N^{K\backslash G}$. Proceeding similarly, $Kg\gamma = Kgn$ for some $n \in N$. Then $\gamma = n$. Therefore, $\gamma \in N$, so $N^{K\backslash G} \subseteq N$. Thus, $N=N^{H\backslash G}=N^{K\backslash G}$.
\end{proof}

Observe that if we had not assumed $N$ was a realizable kernel, the condition would become $N^{H\backslash G} = N^{K\backslash G}$.

If this condition holds, the existence of the dotted map to complete the square formed by an orbit quotient and two given rigid quotients reduces to the question of when one rigid quotient factors through another.  Using the characterization of rigid quotients of connected quandles given by Proposition \ref{easyprop}, the necessary and sufficient condition is given by containment of stabilizer subgroups:

\begin{theorem}
Let $c: H\backslash G \to K\backslash G$ be defined by $c(Hg)=Kg$ and $c': H\backslash G \to K\backslash G$ by $c'(Hg)=Lg$ be rigid quotients, in both cases with the quandle structure induced by an element $\eta$ which must lie in both $Z(K)\cap H$ and $Z(L)\cap H$ by Proposition \ref{easyprop}. Then $\Phi : K\backslash G \to L\backslash G$ defined by $\Phi (Kg)=Lg$ is a well-defined quandle homomorphism if and only if  $K \subset L$, and moreover is a rigid quotient and the diagram
$$\begin{tikzpicture}[node distance=2cm, auto]
  \node (C) {$K\backslash G$};
  \node (P) [below of=C] {$H\backslash G$};
  \node (Q) [right of=P] {$L\backslash G$};
  \draw[->] (C) to node {$\Phi$} (Q);
  \draw[->] (P) to node {$c$} (C);
  \draw[->] (P) to node [swap] {$c'$} (Q);
\end{tikzpicture}$$

\noindent commutes.
\end{theorem}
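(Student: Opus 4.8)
The plan is to follow the same template as the orbit-quotient factorization theorem proved earlier in this section, trading orbit arguments for coset arguments and leaning on two tools: the coset description of the quandle operation from Theorem~\ref{connstructure}, and the functoriality of $Inn(-)$ on ${\bf Quand_{epi}}$ from Theorem~\ref{epifunctor}. Throughout I work with right cosets, so that $Kg = Kg'$ exactly when $gg'^{-1} \in K$, and similarly for $H$ and $L$. The first step is the well-definedness of $\Phi$, which by the form of the statement simultaneously settles the ``only if'' direction: if $K \subseteq L$ and $Kg = Kg'$, then $gg'^{-1} \in K \subseteq L$, so $Lg = Lg'$ and $\Phi(Kg) := Lg$ is unambiguous; conversely, if $\Phi$ is well defined, then for each $k \in K$ we have $Kk = K = Ke$, hence $L = \Phi(K) = \Phi(Kk) = Lk$, so $k \in L$, and thus $K \subseteq L$.

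Assume now $K \subseteq L$. Next I would check that $\Phi$ is a quandle homomorphism. By Proposition~\ref{easyprop} applied to $c$ and to $c'$, the augmentation value of the trivial coset is one and the same element $\eta$ for all three of $H\backslash G$, $K\backslash G$, and $L\backslash G$, and by Theorem~\ref{connstructure} the operations on $K\backslash G$ and $L\backslash G$ are $Kg \rhd K\gamma = Kg\gamma^{-1}\eta\gamma$ and $Lg \rhd L\gamma = Lg\gamma^{-1}\eta\gamma$. Hence $\Phi(Kg \rhd K\gamma) = \Phi(Kg\gamma^{-1}\eta\gamma) = Lg\gamma^{-1}\eta\gamma = \Phi(Kg) \rhd \Phi(K\gamma)$, so $\Phi$ is a homomorphism, and it is surjective since $\Phi(Kg) = Lg$ exhausts $L\backslash G$. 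Commutativity of the triangle is then immediate: $\Phi(c(Hg)) = \Phi(Kg) = Lg = c'(Hg)$, that is, $\Phi \circ c = c'$.

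It remains to see that $\Phi$ is a rigid quotient, that is, that $Inn(\Phi)$ is an isomorphism. Rather than compute $Inn(\Phi)$ on generators, I would exploit the commutativity just established: since $c$, $c'$, $\Phi$ are surjective quandle homomorphisms with $\Phi \circ c = c'$, functoriality of $Inn(-)$ (Theorem~\ref{epifunctor}) gives $Inn(\Phi) \circ Inn(c) = Inn(c')$ as homomorphisms $Inn(H\backslash G) \to Inn(L\backslash G)$; since $c$ and $c'$ are rigid, $Inn(c)$ and $Inn(c')$ are isomorphisms, so $Inn(\Phi) = Inn(c') \circ Inn(c)^{-1}$ is an isomorphism.

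I do not anticipate a serious obstacle: once the coset formula for $\rhd$ with the common $\eta$ and the functoriality of $Inn(-)$ are in hand, each step is a one- or two-line verification, and the only real care needed is keeping the handedness of cosets and of the $G$-action consistent with the conventions of Theorem~\ref{connstructure}. If a self-contained rigidity argument is preferred to the functoriality shortcut, one can instead note that $G$ acts faithfully on both $K\backslash G$ and $L\backslash G$ by Proposition~\ref{easyprop}, so $Inn(K\backslash G) = G = Inn(L\backslash G)$ by Theorem~\ref{connconstruct}, and under these identifications $Inn(\Phi)$ carries the generator $S_{Kg}$, which is $g^{-1}\eta g$, to $S_{Lg}$, again $g^{-1}\eta g$; since the elements $g^{-1}\eta g$ generate $G$ by Theorem~\ref{connstructure}, $Inn(\Phi) = \mathrm{id}_G$.
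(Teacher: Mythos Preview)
Your proof is correct and follows essentially the same route as the paper's: both establish well-definedness via the containment $K \subseteq L$, verify the homomorphism property using the common augmentation value $\eta$ guaranteed by Proposition~\ref{easyprop}, and obtain commutativity by construction. Your treatment of rigidity is if anything more explicit than the paper's (which simply cites Proposition~\ref{easyprop}), but the underlying idea---that faithfulness of the $G$-action on both coset spaces forces $Inn(\Phi)$ to be an isomorphism---is the same.
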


%\textbf{Conjecture.} Let $c: H\backslash G \to K\backslash G$ and $c': H\backslash G \to L\backslash G$ be homomorphisms. If $G$ acts faithfully on $K\backslash G$ and on $L\backslash G$, then $|K|=|H|=|L|$.

\begin{proof}
Let $Hg_1, Hg_2 \in H\backslash G$. Since $c$ is a homomorphism, $c(Hg_{1} \rhd Hg_{2})=c(Hg_{1}) \rhd c(Hg_{2})$, which implies $Kg_{1}g_{2}^{-1}|H|g_{2}=Kg_{1}g_{2}^{-1}|K|g_{2}$. Hence, since $G$ acts faithfully on $K\backslash G$, $|H|=|K|$. Proceeding similarly, $c'(Hg_{1} \rhd Hg_{2})=c'(Hg_{1}) \rhd c'(Hg_{2})$, which implies $Lg_{1}g_{2}^{-1}|H|g_{2}=Lg_{1}g_{2}^{-1}|L|g_{2}$. Therefore, $|H|=|L|$. Thus $|K|=|H|=|L|$.

\setlength\parindent{36pt}\textbf{Well-defined:} Define $\Phi : K\backslash G \to L\backslash G$ by $\Phi (Kg)=Lg$. First, we will prove that $\Phi$ is well-defined. Let $Ka, Kb \in K\backslash G$. Then $\Phi(Ka)=La$ and $\Phi(Kb)=Lb$. Suppose $Ka=Kb$. Then by $Lemma$ we have $a = k \cdot b$ for some $k \in K$. Since $K \subset L$, which implies  $k \in L$, $La=Lb$. Thus, we have $\Phi(Ka)=La=Lb=\Phi(Kb)$.

\setlength\parindent{36pt}\textbf{Homomorphism:} Let $Kg_{1}, Kg_{2} \in K\backslash G$. Then $\Phi(Kg_{1})=Lg_{1}$ and $\Phi (Kg_{2})=Lg_{2}$. 
We have $\Phi (Kg_{1} \rhd Kg_{2})=\Phi (Kg_{1}g_{2}^{-1}|K|g_{2})=Lg_{1}g_{2}^{-1}|K|g_{2}$ and $\Phi (Kg_{1}) \rhd \Phi(Kg_{2})=Lg_{1}\rhd Lg_{2}=Lg_{1}g_{2}^{-1}|L|g_{2}$. Since $|K|=|L|$, we see $Lg_{1}g_{2}^{-1}|K|g_{2}=Lg_{1}g_{2}^{-1}|L|g_{2}$.
Hence, $\Phi$ is a homomorphism.

\setlength\parindent{36pt}$\boldsymbol{K \subset L:}$ Suppose $\Phi$ is a well-defined homomorphism. Assume, for the purpose of contradiction, that $K \nsubseteq L$. Let $Ka \in K\backslash G$ where $a \in K$ but $a \notin L$. Then $\Phi(Ka)=La$. But $\Phi(Ka)=\Phi(K)=L$, which implies $La=L$ and $a \in L$. Thus, by contradiction, $K \subset L$.

\setlength\parindent{36pt}\textbf{Rigidity:} This follows immediately from Proposition \ref{easyprop}.

\setlength\parindent{36pt}\textbf{Commutativity:} Having shown that $\Phi$ is well-defined, this is immediate by construction.
\end{proof}

\section{Directions for Future Research}

The present paper has given a complete solution to the problem of when one surjective quandle homomorphism factors through another in the case where the source and target quandles of both maps are connected.  The fully general problem, in which both simplifying hypotheses, connectedness and surjectivity, are dropped, appears intractable at the moment.  However, the hypotheses can be weakened bit by bit.  

Dropping surjectivity raises the question of how to understand inclusions of connected quandles into other connected quandles.  An adequate understanding of such inclusions should allow the solution of the more general problem of when one homomorphism between connected quandles factors through another.  It would also come close to solving the problem in the case where the map through which the first given map is to factor has a connected source, but an arbitrary target, as the orbit decomposition theorems of \cite{Ehrman, Nelson} reduce that problem to understanding maps into the connected subquandles arising from the iterated order decomposition.

The fully general problem could also be attacked in stages according to the ``depth'' of the orbit decompositions of the quandles involved (the number of iterations of orbit decomposition needed to reach connnected subquandles), either with or without the surjectivity condition.  The present paper solves the surjective case for depth 0.  The previous paragraph describes an attack on the problem at depth 0, but surjectivity dropped.  Another natural special case to attack next would be factorization conditions for surjective maps between quandles of depth less than or equal to 1.

\newpage
\begin{center}
    {\large\textbf{References}}
\end{center}
\vskip.5cm

\end{document}